\newtheorem{theorem}{Theorem}[section]
\newtheorem{corollary}[theorem]{Corollary}
\theoremstyle{definition}
\theoremstyle{remark}
\newtheorem{remark}[theorem]{Remark}
\numberwithin{equation}{section}
\begin{document}

\title[Two Dimensional Diamond-alpha Inequalities on Time
Scales]{H\"{o}lder's and Hardy's Two Dimensional Diamond-alpha Inequalities on Time Scales\footnote{Accepted for publication (October 21, 2009) in the journal \emph{Annals of the University
of Craiova, Mathematics and Computer Science Series}
(\url{http://inf.ucv.ro/~ami}).}}

\author[M. R. Sidi Ammi, D. F. M. Torres]{Moulay Rchid Sidi
Ammi$^1$, Delfim F. M. Torres$^2$}

\address{$^{1}$ Department of Mathematics,
Fac. des Sci. et Tech. (FST-Errachidia),
University My Ismail, BP: 509 Boutalamine,
Errachidia, 52000, Morocco.}

\email{sidiammi@ua.pt}

\address{$^{2}$ African Institute for Mathematical Sciences,
6-8 Melrose Road, Muizenberg 7945, Cape Town, South Africa;
\newline
On leave of absence from:\newline
Department of Mathematics,
University of Aveiro, 3810-193 Aveiro, Portugal.}
\email{delfim@aims.ac.za, delfim@ua.pt}

\subjclass[2000]{Primary 26D15; Secondary 39A13.}

\keywords{Time scales, diamond-alpha integrals,
dynamic inequalities, two-dimensional H\"{o}lder's inequalities, Hardy's inequalities.}

\date{}


\begin{abstract}
We prove a two dimensional H\"{o}lder and reverse-H\"{o}lder
inequality on time scales via the diamond-alpha integral. Other
integral inequalities are established as well, which have as
corollaries some recent proved Hardy-type inequalities on time
scales.
\end{abstract}

\maketitle


\section{Introduction}

The theory and applications of dynamic derivatives on time scales
is receiving an increase of interest and attention. This relative
new area was created in order to unify and generalize discrete and
continuous analysis. It was introduced by Stefan Hilger
\cite{h2,h3}, then used as a tool in several computational and
numerical applications \cite{abra,b1,b2}. One important and very
active subject being developed within the theory of time scales is
the study of inequalities
\cite{abp,ozkan,Stef,srd,sidel,adnan,wong}.
The primary purpose of
this paper is to prove more general two dimensional
reverse-H\"{o}lder's and H\"{o}lder's inequalities on time scales,
using the recent theory of combined dynamic derivatives and the
more general notion of diamond-$\alpha$ integral
\cite{Rogers,Sheng,sfhd}. As particular cases, we get Hardy's
inequalities \cite{krm,adnan}.

H\"{o}lder's inequalities and their extensions have received
considerable attention in the theory
of differential and difference equations,
as well as other areas of mathematics
\cite{ozkan,srd,adnan,wong}. Recently, authors in
\cite{adnan} proved a time scale version of H\"{o}lder's
inequality in the two dimensional case, by using the
$\Delta$-integral. Here we extend this result to more general
diamond-$\alpha$ integral inequalities. The results in
\cite{adnan} are obtained choosing $\alpha = 1$; different
inequalities on time scales follow by choosing $0 \le \alpha < 1$
(\textrm{e.g.}, for $\alpha = 0$ one gets new
$\nabla$-integral inequalities).


\section{Preliminaries}
\label{sec:Prel}

A time scale $\mathbb{T}$ is an arbitrary nonempty closed subset
of the real numbers. Let $\mathbb{T}$ be a time scale with the
topology that it inherits from the real numbers. For $t \in
\mathbb{T}$, we define the forward jump operator $\sigma:
\mathbb{T} \rightarrow \mathbb{T}$ by $\sigma(t)= \inf \left \{s
\in \mathbb{T}: s >t \right\}$, and the backward jump operator
$\rho: \mathbb{T} \rightarrow \mathbb{T}$ by $\rho(t)= \sup \left
\{s \in \mathbb{T}: s < t \right \}$.

If $\sigma(t) > t$ we say that $t$ is right-scattered, while if
$\rho(t) < t$ we say that $t$ is left-scattered. Points that are
simultaneously right-scattered and left-scattered are said to be
isolated. If $\sigma(t)=t$, then $t$ is called right-dense; if
$\rho(t)=t$, then $t$ is called left-dense. Points that are
right-dense and left-dense at the same time are called dense. The
mappings $\mu, \nu: \mathbb{T} \rightarrow [0, +\infty)$ defined
by $\mu(t):=\sigma(t)-t$ and $\nu(t):=t-\rho(t)$ are called,
respectively, the forward and backward graininess function.

Given a time scale $\mathbb{T}$, we introduce the sets
$\mathbb{T}^{\kappa}$, $\mathbb{T}_{\kappa}$, and
$\mathbb{T}^{\kappa}_{\kappa}$ as follows. If $\mathbb{T}$ has a
left-scattered maximum $t_{1}$, then $\mathbb{T}^{\kappa}=
\mathbb{T}-\{t_{1} \}$, otherwise $\mathbb{T}^{\kappa}=
\mathbb{T}$. If  $\mathbb{T}$ has a right-scattered minimum
$t_{2}$, then $\mathbb{T}_{\kappa}= \mathbb{T}-\{t_{2} \}$,
otherwise $\mathbb{T}_{\kappa}= \mathbb{T}$. Finally,
$\mathbb{T}_{\kappa}^{\kappa}= \mathbb{T}^{\kappa} \bigcap
\mathbb{T}_{\kappa}$.

Let $f: \mathbb{T}\rightarrow \mathbb{R}$ be a real valued
function on a time scale $\mathbb{T}$. Then, for $t \in
\mathbb{T}^{\kappa}$, we define $f^{\Delta}(t)$ to be the number,
if one exists, such that for all $\epsilon >0$, there is a
neighborhood $U$ of $t$ such that for all $s \in U$,
$$
\left|f(\sigma(t))- f(s)-f^{\Delta}(t)(\sigma(t)-s)\right| \leq
\epsilon |\sigma(t)-s|.
$$
We say that $f$ is delta differentiable on $\mathbb{T}^{\kappa}$
provided $f^{\Delta}(t)$ exists for all $t \in
\mathbb{T}^{\kappa}$. Similarly, for $t \in \mathbb{T}_{\kappa}$
we define $f^{\nabla}(t)$ to be the number, if one exists, such
that for all $\epsilon >0$, there is a neighborhood $V$ of $t$
such that for all $s \in V$
$$
\left|f(\rho(t))- f(s)-f^{\nabla}(t)(\rho(t)-s)\right| \leq
\epsilon |\rho(t)-s|.
$$
We say that $f$ is nabla differentiable on $\mathbb{T}_{\kappa}$,
provided that $f^{\nabla}(t)$ exists for all $t \in
\mathbb{T}_{\kappa}$.

For $f:\mathbb{T}\rightarrow \mathbb{R}$ we define the function
$f^{\sigma}: \mathbb{T}\rightarrow \mathbb{R}$ by
$f^{\sigma}(t)=f(\sigma(t))$ for all $t \in \mathbb{T}$, that is,
$f^{\sigma}= f\circ \sigma$. Similarly, we define the function
$f^{\rho}: \mathbb{T}\rightarrow \mathbb{R}$ by
$f^{\rho}(t)=f(\rho(t))$ for all $t \in \mathbb{T}$, that is,
$f^{\rho}= f\circ \rho$.

A function $f: \mathbb{T} \rightarrow \mathbb{R} $ is called
rd-continuous, provided it is continuous at all right-dense points
in $\mathbb{T}$ and its left-sided limits finite at all left-dense
points in $\mathbb{T}$. A function $f: \mathbb{T} \rightarrow
\mathbb{R} $ is called ld-continuous, provided it is continuous at
all left-dense points in $\mathbb{T}$ and its right-sided limits
finite at all right-dense points in $\mathbb{T}$.

A function $F: \mathbb{T} \rightarrow \mathbb{R} $ is called a
delta antiderivative of $f: \mathbb{T} \rightarrow \mathbb{R}$,
provided $F^{\Delta}(t)=f(t)$ holds for all $t \in
\mathbb{T}^{\kappa}$. Then the delta integral of $f$ is defined by
$$\int^b_a f(t)\Delta t=F(b)-F(a) \, .$$

A function $G: \mathbb{T} \rightarrow \mathbb{R} $ is called a
nabla antiderivative of $g: \mathbb{T} \rightarrow \mathbb{R}$,
provided $G^{\nabla}(t)=g(t)$ holds for all $t \in
\mathbb{T}_{\kappa}$. Then the nabla integral of $g$ is defined by
$\int^b_a g(t)\nabla t=G(b)-G(a)$. For more on the delta and nabla
calculus on time scales, we refer the reader to \cite{abra,b1,b2}.
We review now the recent diamond-$\alpha$ derivative and
integral \cite{Rogers,Sheng,sfhd}.

Let $\mathbb{T}$ be a time scale and $f$ differentiable on
$\mathbb{T}$ in the $\Delta$ and $\nabla$ senses. For $t \in
\mathbb{T}$, we define the diamond-$\alpha$ dynamic derivative
$f^{\diamondsuit_{\alpha}}(t)$ by
$$
f^{\diamondsuit_{\alpha}}(t)= \alpha
f^{\Delta}(t)+(1-\alpha)f^{\nabla}(t), \quad 0 \leq \alpha \leq 1.
$$
Thus, $f$ is diamond-$\alpha$ differentiable if and only if $f$ is
$\Delta$ and $\nabla$ differentiable. The diamond-$\alpha$
derivative reduces to the standard  $\Delta$ derivative for
$\alpha =1$, or the standard $\nabla$ derivative for $\alpha =0$.
On the other hand, it represents a ``weighted derivative'' for
$\alpha \in (0,1)$. Diamond-$\alpha$ derivatives have shown in
computational experiments to provide efficient and balanced
approximation formulas, leading to the design of more reliable
numerical methods \cite{Sheng,sfhd}.

Let $f, g: \mathbb{T} \rightarrow \mathbb{R}$ be diamond-$\alpha$
differentiable at $t \in \mathbb{T}$. Then,

\begin{itemize}

\item[(i)] $f+g: \mathbb{T} \rightarrow \mathbb{R}$ is
diamond-$\alpha$ differentiable at $t \in \mathbb{T}$ with
$$ (f+g)^{\diamondsuit^{\alpha}}(t)=
(f)^{\diamondsuit^{\alpha}}(t)+(g)^{\diamondsuit^{\alpha}}(t).
$$

\item[(ii)] For any constant $c$, $cf:  \mathbb{T} \rightarrow
\mathbb{R}$
 is diamond-$\alpha$ differentiable at $t \in \mathbb{T}$ with
$$
(cf)^{\diamondsuit^{\alpha}}(t)= c(f)^{\diamondsuit^{\alpha}}(t).
$$

\item[(ii)]  $fg: \mathbb{T} \rightarrow \mathbb{R}$ is
diamond-$\alpha$ differentiable at $t \in \mathbb{T}$ with
$$
(fg)^{\diamondsuit^{\alpha}}(t)=
(f)^{\diamondsuit^{\alpha}}(t)g(t)+ \alpha
f^{\sigma}(t)(g)^{\Delta}(t) +(1-\alpha)
f^{\rho}(t)(g)^{\nabla}(t).
$$

\end{itemize}

Let $a, t \in  \mathbb{T}$, and $h: \mathbb{T} \rightarrow
\mathbb{R}$. Then, the diamond-$\alpha$ integral from $a$ to $t$
of $h$ is defined by
$$
\int_{a}^{t}h(\tau) \diamondsuit_{\alpha} \tau = \alpha
\int_{a}^{t}h(\tau) \Delta \tau +(1- \alpha) \int_{a}^{t}h(\tau)
\nabla \tau, \quad 0 \leq \alpha \leq 1 \, ,
$$
provided that there exist delta and nabla integrals of $h$ on
$\mathbb{T}$. It is clear that the diamond-$\alpha$ integral of
$h$ exists when $h$ is a continuous function. Let $a$, $b$, $t \in
\mathbb{T}$, $c \in \mathbb{R}$, and $f$ and $g$ be continuous
functions on $[a,b] \cap \mathbb{T}$. Then (\textrm{cf.}
\cite[Theorem~3.7]{sfhd} and \cite[Lemma~2.2]{srd}), the following
properties hold:

\begin{itemize}

\item[(a)] $\int_{a}^{t}\left( f(\tau)+g(\tau) \right)
\diamondsuit_{\alpha} \tau = \int_{a}^{t} f(\tau)
\diamondsuit_{\alpha} \tau + \int_{a}^{t} g(\tau)
\diamondsuit_{\alpha} \tau$;

\item[(b)] $\int_{a}^{t} c f(\tau) \diamondsuit_{\alpha} \tau = c
\int_{a}^{t} f(\tau) \diamondsuit_{\alpha} \tau$;

\item[(c)] $\int_{a}^{t} f(\tau) \diamondsuit_{\alpha} \tau =
\int_{a}^{b} f(\tau) \diamondsuit_{\alpha} \tau + \int_{b}^{t}
f(\tau) \diamondsuit_{\alpha} \tau$.

\item[(d)] If $f(t)\geq 0$ for all $t\in[a,b]_{\mathbb{T}}$,
    then $\int_a^b f(t)\Diamond_\alpha t\geq 0$.

\item[(e)] If $f(t)\leq g(t)$ for all $t\in[a,b]_{\mathbb{T}}$,
    then $\int_a^b f(t)\Diamond_\alpha t\leq\int_a^b g(t)\Diamond_\alpha t$.

\item[(f)] If $f(t)\geq 0$ for all $t\in[a,b]_{\mathbb{T}}$, then
$f(t)=0$ if and
    only if $\int_a^b f(t)\Diamond_\alpha t=0$.

\end{itemize}


\section{Main Results}
\label{sec:MR}

We prove new diamond-$\alpha$ inequalities. As
particular cases we get $\Delta$-inequalities on time scales for
$\alpha = 1$, and $\nabla$-inequalities on time scales when
$\alpha = 0$. In the sequel we use $[a, b]$ to denote
$[a, b] \cap \mathbb{T}$. We also suppose that all integrals converge.


\begin{theorem}[reverse diamond-$\alpha$ H\"{o}lder's inequality]
\label{thm1} Let $\mathbb{T}$ be a time scale, $a$, $b \in
\mathbb{T}$ with $a < b$, and $f$ and $g$ be two positive
functions satisfying $0< m \leq \frac{f^{p}}{g^{q}} \leq M <
+\infty $ on the set $[a, b]$. If $\frac{1}{p}+ \frac{1}{q}=1$
with $p> 1$, then
\begin{equation}
\label{eq:rdaHi} \left(\int_{a}^{b} f^{p}(t) \diamondsuit_{\alpha}
t \right)^{\frac{1}{p}} \left (  \int_{a}^{b} g^{q}(t)
\diamondsuit_{\alpha} t \right)^{\frac{1}{q}} \leq
\left(\frac{M}{m}\right)^{\frac{1}{pq}} \int_{a}^{b} f(t) g(t)
\diamondsuit_{\alpha} t.
\end{equation}
\end{theorem}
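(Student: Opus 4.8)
The plan is to convert the two-sided pointwise bound on $f^{p}/g^{q}$ into two separate pointwise inequalities comparing $fg$ with $f^{p}$ and with $g^{q}$, and then integrate them using the monotonicity property (e) of the diamond-$\alpha$ integral. Since $f$ and $g$ are positive, so that $g^{q}>0$ on $[a,b]$, the hypothesis $0<m\le f^{p}/g^{q}\le M$ is equivalent to the chain $m\,g^{q}\le f^{p}\le M\,g^{q}$ holding pointwise on $[a,b]$.

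First I would use the lower bound $m\,g^{q}\le f^{p}$. Taking $p$-th roots and recalling that $\frac1p+\frac1q=1$ forces $\frac qp=q-1$, this becomes $m^{1/p}g^{\,q-1}\le f$. Multiplying through by $g>0$ gives the pointwise inequality $m^{1/p}g^{q}\le fg$, and integrating via property (e) yields
$$
\int_{a}^{b} g^{q}(t)\,\diamondsuit_{\alpha} t \;\le\; m^{-1/p}\int_{a}^{b} f(t)g(t)\,\diamondsuit_{\alpha} t .
$$
Symmetrically, I would exploit the upper bound $f^{p}\le M\,g^{q}$, i.e. $M^{-1}f^{p}\le g^{q}$. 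Taking $q$-th roots and using $\frac pq=p-1$ gives $M^{-1/q}f^{\,p-1}\le g$; multiplying by $f>0$ produces $M^{-1/q}f^{p}\le fg$, and integrating gives
$$
\int_{a}^{b} f^{p}(t)\,\diamondsuit_{\alpha} t \;\le\; M^{1/q}\int_{a}^{b} f(t)g(t)\,\diamondsuit_{\alpha} t .
$$

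Finally, I would raise the first estimate to the power $1/q$ and the second to the power $1/p$, then multiply the two. The product of the two factors of $\int_{a}^{b} fg$ collapses to $\left(\int_{a}^{b} fg\right)^{1/p+1/q}=\int_{a}^{b} fg$, while the constants combine to $M^{1/(pq)}m^{-1/(pq)}=(M/m)^{1/(pq)}$, which is exactly the bound \eqref{eq:rdaHi}. I do not expect any serious obstacle here: the argument is driven entirely by the pointwise estimates together with positivity (d) and monotonicity (e) of the diamond-$\alpha$ integral. The only delicate points are the bookkeeping of the exponents ($q/p=q-1$ and $p/q=p-1$) and checking that each pointwise inequality genuinely holds throughout $[a,b]$ so that property (e) is applicable, both of which follow at once from the positivity of $f$ and $g$.
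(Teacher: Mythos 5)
Your proposal is correct and follows essentially the same route as the paper's own proof: both convert the hypothesis into the pointwise estimates $M^{-1/q}f^{p}\le fg$ and $m^{1/p}g^{q}\le fg$ (using $q/p=q-1$, $p/q=p-1$), integrate via properties (b) and (e), raise the two integral inequalities to the powers $1/p$ and $1/q$, and multiply so the factors of $\int_{a}^{b}fg\,\diamondsuit_{\alpha}t$ recombine via $\frac1p+\frac1q=1$. The differences are purely cosmetic (which side of each inequality carries the constant), so there is nothing to fix.
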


\begin{proof}
We have $\frac{f^{p}}{g^{q}} \leq M$. Then, $ f^{\frac{p}{q}} \leq M^{\frac{1}{q}} g$. Multiplying by $f>0$,  it follows that
$$
 f^{p}=  f^{1+\frac{p}{q}}\leq M^{\frac{1}{q}}fg.
$$
Using properties $(e)$ and $(b)$, we can write that
\begin{equation} \label{eq1}
\left (  \int_{a}^{b} f^{p}(t) \diamondsuit_{\alpha} t
\right)^{\frac{1}{p}} \leq M^{\frac{1}{pq}} \left ( \int_{a}^{b}
f(t) g(t) \diamondsuit_{\alpha} t \right)^{\frac{1}{p}}.
\end{equation}
In the same manner, we  have $m^{\frac{1}{p}} g^{\frac{q}{p}} \leq
f$. Then,
$$
\int_{a}^{b} m^{\frac{1}{p}} g^{q}(t) \diamondsuit_{\alpha} t =
m^{\frac{1}{p}} \int_{a}^{b}  g^{1+\frac{q}{p}}(t)
\diamondsuit_{\alpha} t \leq \int_{a}^{b} f(t) g(t)
\diamondsuit_{\alpha} t.
$$
We obtain that
\begin{equation} \label{eq2}
m^{\frac{1}{pq}} \left(\int_{a}^{b} g^{q}(t) \diamondsuit_{\alpha}
t \right)^{\frac{1}{q}} \leq \left (  \int_{a}^{b} f(t) g(t)
\diamondsuit_{\alpha} t \right)^{\frac{1}{q}}.
\end{equation}
Gathering \eqref{eq1} and \eqref{eq2}, the intended
inequality \eqref{eq:rdaHi} is proved.
\end{proof}


\begin{remark}
For the particular case $\mathbb{T}=\mathbb{R}$,
Theorem~\ref{thm1} gives \cite[Theorem~2.1]{krm}. For $\alpha =
1$, Theorem~\ref{thm1} coincides with \cite[Lemma~1]{adnan}.
\end{remark}


We now define the diamond-$\alpha$ integral for a function of two variables.
The double integral is defined as an iterated integral.
Let $\mathbb{T}$ be a time scale with $a, b \in \mathbb{T}$,
$a < b$, and $f$ be a real-valued function on $\mathbb{T} \times \mathbb{T}$.
Because we need notation for partial derivatives with respect to time scale
variables $x$ and $y$ we denote the time scale partial derivative of $f(x,y)$
with respect to $x$ by $f^{\diamondsuit_{\alpha}^1}(x,y)$ and let
$f^{\diamondsuit_{\alpha}^2}(x,y)$ denote the time scale partial derivative
with respect to $y$. Definition of these partial derivatives are now given.
Fix an arbitrary $y \in \mathbb{T}$. Then the diamond-$\alpha$ derivative
of function
\begin{gather*}
\mathbb{T} \rightarrow \mathbb{R} \\
x \mapsto f(x,y)
\end{gather*}
is denoted by $f^{\diamondsuit_{\alpha}^1}$.
Let now $x \in \mathbb{T}$. The diamond-$\alpha$ derivative
of function
\begin{gather*}
\mathbb{T} \rightarrow \mathbb{R} \\
y \mapsto f(x,y)
\end{gather*}
is denoted by $f^{\diamondsuit_{\alpha}^2}$.
If function $f$ has a $\diamondsuit_{\alpha}^1$ antiderivative $A$, \textrm{i.e.},
$A^{\diamondsuit_{\alpha}^1} = f$, and $A$ has a $\diamondsuit_{\alpha}^2$
antiderivative $B$, \textrm{i.e.}, $B^{\diamondsuit_{\alpha}^2} = A$, then
\begin{equation*}
\begin{split}
\int_{a}^{b} \int_{a}^{b} f(x, y) \diamondsuit_{\alpha} x \diamondsuit_{\alpha} y
&:= \int_{a}^{b} \left( A(b,y) - A(a,y)\right) \diamondsuit_{\alpha} y \\
&= B(b,b) - B(b,a) - B(a,b) + B(a,a) \, .
\end{split}
\end{equation*}
Note that $\left(B^{\diamondsuit_{\alpha}^2}\right)^{\diamondsuit_{\alpha}^1} = f$.

\begin{theorem}[two dimensional diamond-$\alpha$ H\"{o}lder's inequality]
\label{thm:2} Let $\mathbb{T}$ be a time scale, $a$, $b \in
\mathbb{T}$ with $a < b$, $f, g, h : [a, b] \times [a, b]
\rightarrow \mathbb{R}$ be $\diamondsuit_{\alpha}$-integrable
functions, and $\frac{1}{p}+ \frac{1}{q}=1$ with $ p> 1$. Then,
\begin{multline}\label{eq3}
\int_{a}^{b} \int_{a}^{b}|h(x, y)f(x, y)g(x, y)|
\diamondsuit_{\alpha} x \diamondsuit_{\alpha} y \\
\leq  \left ( \int_{a}^{b} \int_{a}^{b}|h(x, y)|f(x, y)|^{p}
\diamondsuit_{\alpha} x \diamondsuit_{\alpha} y
\right)^{\frac{1}{p}} \left ( \int_{a}^{b} \int_{a}^{b}|h(x,
y)|g(x, y)|^{q} \diamondsuit_{\alpha} x \diamondsuit_{\alpha} y
\right)^{\frac{1}{q}}.
\end{multline}
\end{theorem}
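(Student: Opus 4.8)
The plan is to reduce the two-dimensional statement to the classical pointwise Young inequality and then integrate, exactly as in the one-dimensional proof of H\"older but carried out for the iterated diamond-$\alpha$ integral defined above. I set
$$
F = \left(\int_{a}^{b}\int_{a}^{b} |h(x,y)||f(x,y)|^{p} \diamondsuit_{\alpha} x \diamondsuit_{\alpha} y\right)^{\frac{1}{p}}, \qquad G = \left(\int_{a}^{b}\int_{a}^{b} |h(x,y)||g(x,y)|^{q} \diamondsuit_{\alpha} x \diamondsuit_{\alpha} y\right)^{\frac{1}{q}}
$$
for the two factors on the right-hand side of \eqref{eq3}; the goal is to show that the left-hand side of \eqref{eq3} is bounded by $FG$.

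First I would dispose of the degenerate cases $F=0$ or $G=0$. If, say, $F=0$, then the inner integral $\varphi(y)=\int_{a}^{b}|h(x,y)||f(x,y)|^{p}\diamondsuit_{\alpha}x$ is nonnegative by property $(d)$ and has vanishing outer integral, so property $(f)$ forces $\varphi\equiv 0$; a second application of $(f)$ in the $x$-variable then gives $|h(x,y)||f(x,y)|^{p}=0$ on $[a,b]\times[a,b]$, whence $|h(x,y)f(x,y)g(x,y)|=0$ there as well, and both sides of \eqref{eq3} vanish. Thus I may assume $F>0$ and $G>0$.

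The core step is the pointwise Young inequality: for nonnegative reals $A,B$ and conjugate exponents $p,q$ one has $AB \le \frac{A^{p}}{p}+\frac{B^{q}}{q}$. Applying it with $A = |f(x,y)|/F$ and $B=|g(x,y)|/G$, and then multiplying through by $|h(x,y)|\ge 0$, yields for every $(x,y)\in[a,b]\times[a,b]$
$$
\frac{|h(x,y)f(x,y)g(x,y)|}{FG} \le \frac{1}{p}\,\frac{|h(x,y)||f(x,y)|^{p}}{F^{p}} + \frac{1}{q}\,\frac{|h(x,y)||g(x,y)|^{q}}{G^{q}}.
$$
Integrating this inequality over $[a,b]\times[a,b]$ — invoking the monotonicity, additivity and homogeneity properties $(e)$, $(a)$, $(b)$ for the iterated integral — and recognizing that the two double integrals on the right equal $F^{p}$ and $G^{q}$ respectively, the right-hand side collapses to $\frac{1}{p}+\frac{1}{q}=1$. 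Rearranging gives the left-hand side of \eqref{eq3} $\le FG$, which is the claim.

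The step I expect to require the most care is the transfer of properties $(a)$, $(b)$, $(e)$ (and, in the degenerate case, $(d)$ and $(f)$) from the single diamond-$\alpha$ integral to the iterated double integral. Since the latter is built as an outer $\diamondsuit_{\alpha}$-integral in $y$ of the inner $\diamondsuit_{\alpha}$-integral in $x$, each property must be invoked twice: first for the inner integral, observing that the resulting function of $y$ inherits the needed sign or linearity, and then for the outer integral. Granting this routine bookkeeping, the argument above is complete; everything else — Young's inequality and the arithmetic with the conjugate exponents — is elementary.
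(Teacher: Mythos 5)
Your proposal is correct and takes essentially the same approach as the paper: normalize by the two right-hand-side factors, apply the pointwise Young inequality, and integrate so that the bound collapses to $\frac{1}{p}+\frac{1}{q}=1$. The only differences are cosmetic --- the paper splits the weight as $|h|=|h|^{1/p}\,|h|^{1/q}$ inside the normalized functions $A$ and $B$, whereas you multiply by $|h|$ after applying Young --- and your treatment of the degenerate case via properties (d) and (f) is in fact more careful than the paper's one-line dismissal of it.
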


\begin{proof}
Inequality \eqref{eq3} is trivially true in the case when $f$ or
$g$ or $h$ is identically zero. Suppose that
$$
\left (\int_{a}^{b} \int_{a}^{b}|h(x, y)||f(x,
y)|^{\frac{1}{p}}\diamondsuit_{\alpha} x \diamondsuit_{\alpha} y
\right ) \left ( \int_{a}^{b} \int_{a}^{b}|h(x, y)||g(x,
y)|^{\frac{1}{q}} \diamondsuit_{\alpha} x \diamondsuit_{\alpha} y
\right)\neq 0 \, ,
$$
and let
$$
A(x, y)= \frac{|h^{\frac{1}{p}}(x, y)||f(x, y)|}{\int_{a}^{b}
\int_{a}^{b}|h(x, y)||f(x, y)|^{\frac{1}{p}} \diamondsuit_{\alpha}
x \diamondsuit_{\alpha} y} \, ,
$$
and
$$
B(x, y)= \frac{|h^{\frac{1}{q}}(x, y)||g(x, y)|}{\int_{a}^{b}
\int_{a}^{b}|h(x, y)||g(x, y)|^{\frac{1}{q}} \diamondsuit_{\alpha}
x \diamondsuit_{\alpha} y} \, .
$$
From the well-known Young's inequality $\xi \lambda \leq
\frac{\xi^{p}}{p}+ \frac{\lambda^{q}}{q}$, valid for nonnegative
real numbers $\xi$ and $\lambda$, we have that
\begin{equation*}
\begin{split}
\int_{a}^{b} & \int_{a}^{b} A(x, y) B(x, y) \diamondsuit_{\alpha}
x
\diamondsuit_{\alpha} y \\
& \leq  \int_{a}^{b} \int_{a}^{b} \left [ \frac{A^{p}(x, y)}{p}+
\frac{B^{q}(x, y)}{q}\right ]
\diamondsuit_{\alpha} x \diamondsuit_{\alpha} y\\
& \leq \frac{1}{p}\int_{a}^{b} \int_{a}^{b}
\frac{|h||f|^{p}\diamondsuit_{\alpha} x \diamondsuit_{\alpha}
y}{\int_{a}^{b} \int_{a}^{b}|h||f|^{p}} + \frac{1}{q}\int_{a}^{b}
\int_{a}^{b} \frac{|h||g|^{q}\diamondsuit_{\alpha} x
\diamondsuit_{\alpha} y}{\int_{a}^{b} \int_{a}^{b}|h||g|^{q}}\\
& \leq \frac{1}{p}+ \frac{1}{q}=1 \, ,
\end{split}
\end{equation*}
and the desired result follows.
\end{proof}

\begin{remark}
For the particular case $\alpha=1$, Theorem~\ref{thm:2} coincides
with \cite[Theorem~4]{adnan}.
\end{remark}

\begin{theorem}[two dimensional diamond-$\alpha$ Cauchy-Schwartz's inequality]
Let $\mathbb{T}$ be a time scale, $a$, $b \in \mathbb{T}$ with $a
< b$. For $\diamondsuit_{\alpha}$-integrable functions $f, g, h:
[a, b] \times [a, b] \rightarrow \mathbb{R}$, we have:
\begin{multline}\label{eq4}
\int_{a}^{b} \int_{a}^{b}|h(x, y)f(x, y)g(x, y)|
\diamondsuit_{\alpha} x \diamondsuit_{\alpha} y \\
\leq \sqrt{ \left ( \int_{a}^{b} \int_{a}^{b}|h(x, y)||f(x,
y)|^{2} \diamondsuit_{\alpha} x \diamondsuit_{\alpha} y \right)
 \left (
\int_{a}^{b} \int_{a}^{b}|h(x, y)||g(x, y)|^{2}
\diamondsuit_{\alpha} x \diamondsuit_{\alpha} y \right)}.
\end{multline}
\end{theorem}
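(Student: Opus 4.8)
The plan is to recognize this Cauchy--Schwarz inequality as nothing more than the special case $p = q = 2$ of the two dimensional diamond-$\alpha$ H\"older inequality already proved in Theorem~\ref{thm:2}. First I would check that the pair $p = q = 2$ is admissible for that theorem: indeed $p = 2 > 1$ and $\frac{1}{p} + \frac{1}{q} = \frac{1}{2} + \frac{1}{2} = 1$, so the conjugacy hypothesis is satisfied. Since $f$, $g$, $h$ are assumed to be $\diamondsuit_{\alpha}$-integrable on $[a,b] \times [a,b]$, they meet exactly the hypotheses required by Theorem~\ref{thm:2}, and so \eqref{eq3} applies to them.

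The second step is simply to substitute $p = q = 2$ into \eqref{eq3}. The left-hand side is literally the left-hand side of \eqref{eq4}, so nothing changes there. On the right-hand side, the integrands $|f|^{p}$ and $|g|^{q}$ become $|f|^{2}$ and $|g|^{2}$, while the outer exponents $\frac{1}{p}$ and $\frac{1}{q}$ both become $\frac{1}{2}$. Writing the product of two one-half powers as a single square root then yields precisely the bound in \eqref{eq4}.

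The only point that deserves a moment's attention is this matching of exponents, namely that $\left(\int_a^b\int_a^b |h||f|^2\right)^{1/2}\left(\int_a^b\int_a^b |h||g|^2\right)^{1/2}$ equals $\sqrt{\left(\int_a^b\int_a^b |h||f|^2\right)\left(\int_a^b\int_a^b |h||g|^2\right)}$; this is immediate. I therefore do not expect any genuine obstacle here: the statement is a direct corollary of Theorem~\ref{thm:2}, and no independent argument---such as a fresh appeal to Young's inequality with the choice $2\xi\lambda \le \xi^2 + \lambda^2$---is required, although one could in principle reprove it from scratch along those same lines.
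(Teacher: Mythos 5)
Your proposal is correct and matches the paper's own proof exactly: the paper also deduces \eqref{eq4} as the particular case $p=q=2$ of the two dimensional diamond-$\alpha$ H\"{o}lder inequality \eqref{eq3}. Your additional checks (conjugacy of $p=q=2$ and rewriting the product of the two $\tfrac{1}{2}$-powers as a single square root) are the only details the paper leaves implicit, and they are handled correctly.
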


\begin{proof}
The Cauchy-Schwartz inequality \eqref{eq4} is the particular case
$p=q=2$ of \eqref{eq3}.
\end{proof}

We now obtain some general results for estimating the diamond-alpha double integral
$\int_{a}^{b} \int_{a}^{b} K(x, y) f(x)g(y) \diamondsuit_{\alpha}
x \diamondsuit_{\alpha} y $.

\begin{theorem}[diamond-$\alpha$ Hardy-type inequalities]
\label{thm:da:Hineq} Let $\mathbb{T}$ be a time scale, $a$, $b \in
\mathbb{T}$ with $a < b$, and $K(x, y)$, $f(x)$, $g(y)$,
$\varphi(x)$, and $\psi(y)$ be nonnegative functions. Let $$F(x)=\int_{a}^{b} K(x, y)
\psi^{-p}(y)\diamondsuit_{\alpha}y$$ and $$G(y)=\int_{a}^{b} K(x,y) \varphi^{-q}(x) \diamondsuit_{\alpha} x \, ,$$
where
$\frac{1}{p}+\frac{1}{q}=1$, $p> 1$. Then, the two inequalities
\begin{multline}\label{eq5}
\int_{a}^{b} \int_{a}^{b} K(x, y) f(x)g(y) \diamondsuit_{\alpha} x
\diamondsuit_{\alpha} y \\
\leq \left (\int_{a}^{b} \varphi^{p}(x) F(x) f^{p}(x)
\diamondsuit_{\alpha} x \right)^{\frac{1}{p}}   \left (
\int_{a}^{b} \psi^{q}(y) G(y) g^{q}(y) \diamondsuit_{\alpha} y
\right)^{\frac{1}{p}}
\end{multline}
and
\begin{equation}\label{eq6}
\int_{a}^{b} G^{1-p}(y) \psi^{-p}(y) \left ( \int_{a}^{b} K(x, y)
f(x) \diamondsuit_{\alpha} x \right)^{p} \diamondsuit_{\alpha} y
\leq \int_{a}^{b} \varphi^{p}(x) F(x)
f^{p}(x)\diamondsuit_{\alpha} x
\end{equation}
hold and are equivalent.
\end{theorem}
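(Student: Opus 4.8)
The plan is to establish \eqref{eq5} directly from the two-dimensional H\"older inequality of Theorem~\ref{thm:2}, and then to prove that \eqref{eq5} and \eqref{eq6} are equivalent; since \eqref{eq5} will have been shown to hold, \eqref{eq6} follows. Throughout I will use that all the functions involved are nonnegative, so absolute values may be dropped, and I will interchange the order of the iterated diamond-$\alpha$ integrals whenever convenient (a Fubini-type step licensed by the standing convergence hypothesis and by the continuity underlying the definition of the double integral).

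For \eqref{eq5}, the key device is the factorization
\[
K(x,y)f(x)g(y) = \left[K^{1/p}(x,y)\,\frac{\varphi(x)}{\psi(y)}\,f(x)\right]\left[K^{1/q}(x,y)\,\frac{\psi(y)}{\varphi(x)}\,g(y)\right],
\]
whose two bracketed factors I feed into Theorem~\ref{thm:2} with weight $h=K$. Since $\frac1p+\frac1q=1$ we have $K^{1/p}K^{1/q}=K$ and the $\varphi,\psi$ factors cancel, so the product is indeed $Kfg$. Applying \eqref{eq3} gives the bound $\big(\int_a^b\int_a^b K\varphi^p\psi^{-p}f^p\big)^{1/p}\big(\int_a^b\int_a^b K\psi^q\varphi^{-q}g^q\big)^{1/q}$. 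In the first double integral I integrate in $y$ first and recognize $F(x)=\int_a^b K(x,y)\psi^{-p}(y)\diamondsuit_{\alpha}y$, producing $\int_a^b\varphi^p(x)F(x)f^p(x)\diamondsuit_{\alpha}x$; in the second I integrate in $x$ first and recognize $G(y)$, producing $\int_a^b\psi^q(y)G(y)g^q(y)\diamondsuit_{\alpha}y$. This is precisely the right-hand side of \eqref{eq5} (with the second exponent read as $1/q$, as H\"older dictates).

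For the equivalence I abbreviate $L(y)=\int_a^b K(x,y)f(x)\diamondsuit_{\alpha}x$, so that the left-hand side of \eqref{eq5} equals $\int_a^b g(y)L(y)\diamondsuit_{\alpha}y$; I write $J=\int_a^b G^{1-p}(y)\psi^{-p}(y)L^p(y)\diamondsuit_{\alpha}y$ for the left side of \eqref{eq6} and $A=\int_a^b\varphi^p F f^p\diamondsuit_{\alpha}x$ for its right side. To get \eqref{eq6} from \eqref{eq5}, I make the extremal choice $g(y)=G^{1-p}(y)\psi^{-p}(y)L^{p-1}(y)$; using $q(p-1)=p$ and $q(1-p)=-p$, a direct computation shows that both $\int_a^b gL$ and the factor $\int_a^b\psi^q G g^q$ collapse to the same quantity $J$, so \eqref{eq5} reads $J\le A^{1/p}J^{1/q}$, which after dividing by $J^{1/q}$ yields $J\le A$, i.e. \eqref{eq6}. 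Conversely, to recover \eqref{eq5} from \eqref{eq6}, I factor $g(y)L(y)=\big(\psi^{-1}G^{(1-p)/p}L\big)\big(\psi G^{(p-1)/p}g\big)$ and apply the one-variable H\"older inequality in $y$; the first resulting factor is $J^{1/p}\le A^{1/p}$ by \eqref{eq6}, and the second is $\big(\int_a^b\psi^q G g^q\big)^{1/q}$, which reproduces \eqref{eq5}.

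The routine parts are the exponent bookkeeping (everything hinges on the identities $q(p-1)=p$ and $q(1-p)=-p$ coming from $\frac1p+\frac1q=1$) and the Fubini interchange. The step I expect to need the most care is the extremal substitution $g=G^{1-p}\psi^{-p}L^{p-1}$: it presupposes $G(y)>0$ and finiteness of the integrals involving $G^{1-p}$, so the degenerate sets where $G$ or $L$ vanishes must be treated separately, and one should verify that this $g$ is an admissible nonnegative, $\diamondsuit_{\alpha}$-integrable function before feeding it back into \eqref{eq5}.
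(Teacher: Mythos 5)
Your proposal is correct and follows essentially the same route as the paper's own proof: the same factorization $K^{1/p}(\varphi/\psi)f\cdot K^{1/q}(\psi/\varphi)g$ fed into the two-dimensional H\"older inequality for \eqref{eq5}, the same extremal substitution $g=G^{1-p}\psi^{-p}L^{p-1}$ to pass from \eqref{eq5} to \eqref{eq6}, and the same one-variable H\"older factorization $\bigl(\psi^{-1}G^{-1/q}L\bigr)\bigl(\psi G^{1/q}g\bigr)$ for the converse. Your explicit flagging of the exponent $1/q$ on the second factor of \eqref{eq5} (a typo in the stated theorem) and of the positivity/finiteness caveats needed before dividing by $J^{1/q}$ are points the paper glosses over.
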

Equation \eqref{eq6} is the
diamond-$\alpha$ Hardy's inequality.
\begin{proof}
First, we prove that \eqref{eq5} hold. Write
\begin{equation*}
\int_{a}^{b} \int_{a}^{b} K(x, y) f(x)g(y) \diamondsuit_{\alpha} x
\diamondsuit_{\alpha} y  = \int_{a}^{b} \int_{a}^{b} K(x, y)
f(x)\frac{\varphi(x)}{\psi(y)}g(y) \frac{\psi(y)}{\varphi(x)}
\diamondsuit_{\alpha} x \diamondsuit_{\alpha} y.
\end{equation*}
Applying H\"{o}lder's inequality on time scale, we have
\begin{multline*}
\int_{a}^{b} \int_{a}^{b} K(x, y) f(x)g(y) \diamondsuit_{\alpha} x
\diamondsuit_{\alpha} y \\
 \leq \left (\int_{a}^{b} \varphi^{p}(x) F(x) f^{p}(x)
\diamondsuit_{\alpha} x \right)^{\frac{1}{p}}   \left (
\int_{a}^{b} \psi^{q}(y) G(y) g^{q}(y) \diamondsuit_{\alpha} y
\right)^{\frac{1}{p}}.
\end{multline*}
Now we show that \eqref{eq5} is equivalent to \eqref{eq6}.
Suppose that inequality \eqref{eq5} is verified. Set
$$
g(y)= G^{1-p}(y) \psi^{-p}(y) \left ( \int_{a}^{b} K(x, y) f(x)
\diamondsuit_{\alpha} x \right)^{p-1} \, .
$$
Using \eqref{eq5} and the fact that $\frac{1}{p}+\frac{1}{q}=1$,
we obtain:
\begin{equation*}
\begin{split}
&\int_{a}^{b} G^{1-p}(y) \psi^{-p}(y) \left ( \int_{a}^{b} K(x, y)
f(x) \diamondsuit_{\alpha} x \right)^{p} \diamondsuit_{\alpha} y
\\
&= \int_{a}^{b} \int_{a}^{b} K(x, y) f(x) g(y)
\diamondsuit_{\alpha} x \diamondsuit_{\alpha} y \\
& \leq \left( \int_{a}^{b} \varphi^{p}(x) F(x) f^{p}(x)
\diamondsuit_{\alpha} x
 \right)^{\frac{1}{p}} \left( \int_{a}^{b} \psi^{q}(y) G(y) g^{q}(y) \diamondsuit_{\alpha}
 y \right)^{\frac{1}{q}} \\
 & = \left( \int_{a}^{b} \varphi^{p}(x) F(x) f^{p}(x)
\diamondsuit_{\alpha} x
 \right)^{\frac{1}{p}} \\
& \qquad \qquad \cdot \left ( \int_{a}^{b} G^{1-p}(y) \psi^{-p}(y) \left( \int_{a}^{b} K(x, y) f(x)
 \diamondsuit_{\alpha} x\right)^{p}   \diamondsuit_{\alpha} y \right
 )^{\frac{1}{q}}.
\end{split}
\end{equation*}
Inequality \eqref{eq6} is obtained by dividing both sides of the
previous inequality by
$$
\left ( \int_{a}^{b} G^{1-p}(y) \psi^{-p}(y) \left( \int_{a}^{b}
K(x, y) f(x)
 \diamondsuit_{\alpha} x\right)^{p}   \diamondsuit_{\alpha} y \right
 )^{\frac{1}{q}}.
 $$
Reciprocally, suppose that \eqref{eq6} is valid. From
H\"{o}lder's inequality we can write that
\begin{equation*}
\begin{split}
\int_{a}^{b} & \int_{a}^{b} K(x, y) f(x) g(y)
\diamondsuit_{\alpha} x \diamondsuit_{\alpha} y \\
&= \int_{a}^{b} \left ( \psi^{-1}(y) G^{\frac{-1}{q}}(y)
\int_{a}^{b}  K(x, y) f(x) \diamondsuit_{\alpha} x \right )
\psi(y) G^{\frac{1}{q}}(y) g(y) \diamondsuit_{\alpha} y \\
& \leq \left( \int_{a}^{b}  G^{1-p}(y) \psi^{-p}(y) \left (
\int_{a}^{b} K(x, y) f(x) \diamondsuit_{\alpha} x \right
)^{p}\diamondsuit_{\alpha} y \right )^{\frac{1}{p}} \\
& \qquad \qquad \cdot \left (
\int_{a}^{b} \psi^{q}(y) G(y) g^{q}(y) \diamondsuit_{\alpha} y
\right)^{\frac{1}{q}} \, .
\end{split}
\end{equation*}
Using \eqref{eq6}, we get that
\begin{multline*}
\int_{a}^{b} \int_{a}^{b} K(x, y) f(x) g(y) \diamondsuit_{\alpha}
x \diamondsuit_{\alpha} y  \\
\leq \left ( \int_{a}^{b}
\varphi^{p}(x) F(x) f^{p}(x)\diamondsuit_{\alpha} x \right
)^{\frac{1}{q}}  \left ( \int_{a}^{b} \psi^{q}(y) G(y) g^{q}(y)
\diamondsuit_{\alpha} y \right)^{\frac{1}{q}} \, ,
\end{multline*}
which completes the proof.
\end{proof}

\begin{remark}
Choose $\mathbb{T}= \mathbb{R}$. In this particular case the inequalities
\eqref{eq5} and \eqref{eq6} give the Hardy type inequalities
proved in \cite{krm}. If
\begin{equation}
\label{eq:rem:krm}
\left ( f(x)\frac{\varphi(x)}{\psi(y)}\right )^{p} = K\left (
g(y)\frac{\psi(y)}{\varphi(x)}\right )^{q},
\end{equation}
then \eqref{eq5} takes the form of equality.
In this case there exist arbitrary constants $A$ and $B$,
not both zero, such that
$$
f^{p}(x)= A \varphi^{-(p+q)}(x) \mbox{ and } g^{q}(y)= B
\psi^{-(p+q)}(y).
$$
This is possible only if
$$
\int_{a}^{b} F(x) \varphi^{-q}(x) \diamondsuit_{\alpha} x < \infty
\mbox{ and } \int_{a}^{b} G(y) \psi^{-p}(y) \diamondsuit_{\alpha}
y < \infty  \, .
$$
If \eqref{eq:rem:krm} does not hold,
inequalities in Theorem~\ref{thm:da:Hineq} are strict.
\end{remark}

As corollaries of Theorem~\ref{thm:da:Hineq}
we have the following results.

\begin{corollary}
\label{thm37} Let $\mathbb{T}$ be a time scale, $a$, $b \in
\mathbb{T}$ with $a < b$, $h(y)$, $f(x)$, $g(y)$, $\varphi(x)$,
and $\psi(y)$ be nonnegative functions, and
$\frac{1}{p}+\frac{1}{q}=1$ with $p>1$. Setting $H(y)= h(y)
\psi^{-p}(y)$, then the two inequalities
\begin{multline*}
 \int_{a}^{b} \int_{a}^{y} h(y) f(x)g(y) \diamondsuit_{\alpha} x
\diamondsuit_{\alpha} y \\
 \leq \left( \int_{a}^{b} \varphi^{p}(x) f^{p}(x) \left(
\int_{x}^{b} H(y) \diamondsuit_{\alpha} y \right )
\diamondsuit_{\alpha} x \right )^{\frac{1}{p}} \\
\qquad \qquad \left( \int_{a}^{b}
\psi^{q}(y) g^{q}(y) h(y) \left( \int_{a}^{y} \varphi^{-q}(x)
\diamondsuit_{\alpha} x \right ) \diamondsuit_{\alpha} y \right
)^{\frac{1}{q}}
\end{multline*}
and
\begin{multline*}
 \int_{a}^{b} H(y) \left ( \int_{a}^{y} \varphi^{-q}
 \diamondsuit_{\alpha}x \right)^{1-p} \left ( \int_{a}^{y} f(x)
 \diamondsuit_{\alpha}x\right)^{p}\diamondsuit_{\alpha}y \\
 \leq \left( \int_{a}^{b} \varphi^{p}(x) f^{p}(x)
 \left(
\int_{x}^{b} H(y) \diamondsuit_{\alpha} y \right )
\diamondsuit_{\alpha} x \right )^{\frac{1}{p}}
\end{multline*}
hold and are equivalent.
\end{corollary}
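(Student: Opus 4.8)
The plan is to obtain Corollary~\ref{thm37} as a specialization of Theorem~\ref{thm:da:Hineq}, choosing the kernel $K(x,y)$ so that the full double integral over $[a,b]\times[a,b]$ collapses onto the triangular region $\{(x,y):a\le x\le y\le b\}$. Concretely, I would set $K(x,y)=h(y)$ for $a\le x\le y\le b$ and $K(x,y)=0$ otherwise. This $K$ is nonnegative, as required by the hypotheses of Theorem~\ref{thm:da:Hineq}, and with it the left-hand side of \eqref{eq5} becomes exactly $\int_a^b\int_a^y h(y)f(x)g(y)\diamondsuit_\alpha x\diamondsuit_\alpha y$, matching the statement of the corollary.

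First I would compute the auxiliary functions $F$ and $G$ for this kernel. Using the additivity property (c) to split the outer integral at the point $x$ (respectively $y$) and the fact that the integral of the identically-zero part vanishes by property (b), one finds
$$
F(x)=\int_a^b K(x,y)\psi^{-p}(y)\diamondsuit_\alpha y=\int_x^b h(y)\psi^{-p}(y)\diamondsuit_\alpha y=\int_x^b H(y)\diamondsuit_\alpha y,
$$
with $H(y)=h(y)\psi^{-p}(y)$, and, since $K(x,y)$ is independent of $x$ on the triangle,
$$
G(y)=\int_a^b K(x,y)\varphi^{-q}(x)\diamondsuit_\alpha x=h(y)\int_a^y\varphi^{-q}(x)\diamondsuit_\alpha x.
$$

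Next I would substitute these expressions into \eqref{eq5} and \eqref{eq6}. In \eqref{eq5} the factor $\varphi^p(x)F(x)$ becomes $\varphi^p(x)\int_x^b H(y)\diamondsuit_\alpha y$ and the factor $\psi^q(y)G(y)$ becomes $\psi^q(y)h(y)\int_a^y\varphi^{-q}(x)\diamondsuit_\alpha x$, which is precisely the first inequality of the corollary. For \eqref{eq6}, I would first note that $\int_a^b K(x,y)f(x)\diamondsuit_\alpha x=h(y)\int_a^y f(x)\diamondsuit_\alpha x$; then the integrand $G^{1-p}(y)\psi^{-p}(y)\bigl(\int_a^b K(x,y)f(x)\diamondsuit_\alpha x\bigr)^p$ collapses, after cancelling $h^{1-p}(y)\,h^p(y)=h(y)$, to $H(y)\bigl(\int_a^y\varphi^{-q}(x)\diamondsuit_\alpha x\bigr)^{1-p}\bigl(\int_a^y f(x)\diamondsuit_\alpha x\bigr)^p$, giving the second inequality. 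The equivalence of the two inequalities is inherited directly from the equivalence already established in Theorem~\ref{thm:da:Hineq}, so no separate argument is needed.

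The only step requiring genuine care --- and the one I expect to be the main obstacle --- is justifying that the indicator-type kernel $K$ really does reduce the integrals over $[a,b]$ to the triangular integrals $\int_x^b$ and $\int_a^y$. In the classical case $\mathbb{T}=\mathbb{R}$ this is immediate, but on a general time scale one must invoke additivity (c) together with the vanishing of the integral of the zero part, and one should fix the value of $K$ on the diagonal $y=x$ so that no spurious contribution from an isolated diagonal point is introduced into the $\Delta$- or $\nabla$-components of the diamond-$\alpha$ integral. Once this reduction is secured, everything else is a routine rearrangement of exponents using $\tfrac1p+\tfrac1q=1$.
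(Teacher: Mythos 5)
Your proposal is correct and is exactly the paper's own proof: the paper's argument consists of the single line ``Use Theorem~\ref{thm:da:Hineq} with $K(x,y)=h(y)$ if $x\le y$ and $K(x,y)=0$ if $x>y$,'' which is precisely your kernel choice. Your computations of $F(x)=\int_x^b H(y)\diamondsuit_\alpha y$ and $G(y)=h(y)\int_a^y\varphi^{-q}(x)\diamondsuit_\alpha x$, and the cancellation $h^{1-p}h^{p}=h$, simply spell out the routine substitution that the paper leaves implicit.
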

\begin{proof}
Use Theorem~\ref{thm:da:Hineq} with $
 K(x, y)=
 \left\{
 \begin{array}{rll}
 h(y), & \mbox{ if } & x \leq y \\
               0, & \mbox{ if } & x > y  \, .
\end{array}
\right. $
\end{proof}

\begin{corollary}
\label{thm38} Let $\mathbb{T}$ be a time scale, $a$, $b \in
\mathbb{T}$ with $a < b$, $h(y)$, $f(x)$, $g(y)$, $\varphi(x)$,
and $\psi(y)$ be nonnegative, and
$\frac{1}{p}+\frac{1}{q}=1$ with $p> 1$. Then, the two
inequalities
\begin{multline*}
 \int_{a}^{b} \int_{y}^{b} h(y) f(x)g(y) \diamondsuit_{\alpha} x
\diamondsuit_{\alpha} y \\
 \leq \left( \int_{a}^{b} \varphi^{p}(x) f^{p}(x) \left(
\int_{a}^{x} H(y) \diamondsuit_{\alpha} y \right )
\diamondsuit_{\alpha} x \right )^{\frac{1}{p}} \\
\left( \int_{a}^{b}
\psi^{q}(y) g^{q}(y) h(y) \left( \int_{y}^{b} \varphi^{-q}(x)
\diamondsuit_{\alpha} x \right ) \diamondsuit_{\alpha} y \right
)^{\frac{1}{q}},
\end{multline*}
and
\begin{multline*}
\int_{a}^{b} H(y) \left ( \int_{y}^{b} \varphi^{-q}
 \diamondsuit_{\alpha}x \right)^{1-p} \left ( \int_{y}^{b} f(x)
 \diamondsuit_{\alpha}x\right)^{p}\diamondsuit_{\alpha}y \\
\leq \left( \int_{a}^{b} \varphi^{p}(x) f^{p}(x) \left(
\int_{a}^{x} H(y) \diamondsuit_{\alpha} y \right )
\diamondsuit_{\alpha} x \right )^{\frac{1}{p}}
\end{multline*}
hold and are equivalent.
\end{corollary}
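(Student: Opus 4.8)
The plan is to mirror the proof of Corollary~\ref{thm37}, applying Theorem~\ref{thm:da:Hineq} to the complementary kernel
$$
K(x, y)=
\begin{cases}
h(y), & \text{if } x \geq y, \\
0, & \text{if } x < y.
\end{cases}
$$
Where Corollary~\ref{thm37} supports the kernel on $\{x \le y\}$, here the inequality is reversed so that the nonzero part of $K$ lives on $\{x \ge y\}$; consequently the inner $x$-integration is carried out over $[y,b]$ rather than over $[a,y]$. With this choice the double integral on the left of \eqref{eq5} collapses at once to $\int_a^b \int_y^b h(y) f(x) g(y) \diamondsuit_{\alpha} x \diamondsuit_{\alpha} y$, which is the left-hand side of the first asserted inequality.

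Next I would compute the two auxiliary functions $F$ and $G$ of Theorem~\ref{thm:da:Hineq} for this kernel. Since $K(x,y)=h(y)$ precisely when $y\le x$, the definition $F(x)=\int_a^b K(x,y)\psi^{-p}(y)\diamondsuit_{\alpha} y$ reduces to $F(x)=\int_a^x h(y)\psi^{-p}(y)\diamondsuit_{\alpha} y=\int_a^x H(y)\diamondsuit_{\alpha} y$, where $H(y)=h(y)\psi^{-p}(y)$. Dually, $K(x,y)=h(y)$ when $x\ge y$, so $G(y)=\int_a^b K(x,y)\varphi^{-q}(x)\diamondsuit_{\alpha} x=h(y)\int_y^b\varphi^{-q}(x)\diamondsuit_{\alpha} x$. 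Substituting these expressions for $F$ and $G$ into \eqref{eq5} reproduces the first inequality of the corollary verbatim.

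For the second inequality I would substitute the same $F$ and $G$ into the Hardy-type inequality \eqref{eq6}. With the present kernel one has $\int_a^b K(x,y)f(x)\diamondsuit_{\alpha} x=h(y)\int_y^b f(x)\diamondsuit_{\alpha} x$, while $G^{1-p}(y)=h^{1-p}(y)\bigl(\int_y^b\varphi^{-q}\diamondsuit_{\alpha} x\bigr)^{1-p}$; the powers of $h(y)$ recombine with $\psi^{-p}(y)$ into the single factor $H(y)$, so that \eqref{eq6} collapses exactly to the second inequality of the corollary. The equivalence of the two inequalities is then inherited without further work from the equivalence already established in Theorem~\ref{thm:da:Hineq}.

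The only delicate point is the bookkeeping of the integration limits: because $K$ vanishes for $x<y$, one must justify replacing the full inner integral $\int_a^b$ by $\int_y^b$ under the iterated-integral definition of the two-dimensional diamond-$\alpha$ integral. This is the mirror image of the truncation used in Corollary~\ref{thm37} and follows from property~(c) together with the nonnegativity hypotheses. Beyond keeping straight which of $[a,x]$, $[x,b]$, $[a,y]$, $[y,b]$ enters each of $F$, $G$, and the two inequalities, I anticipate no genuine difficulty.
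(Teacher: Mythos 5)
Your proposal is correct and takes essentially the same approach as the paper: the paper's own proof of Corollary~\ref{thm38} is exactly the one-line application of Theorem~\ref{thm:da:Hineq} to the kernel supported where $x$ exceeds $y$ (the paper takes $K(x,y)=h(y)$ for $x>y$ and $K(x,y)=0$ for $x\leq y$), with $F$ and $G$ reducing as you computed. The only discrepancy is your convention on the diagonal ($K(y,y)=h(y)$ rather than $0$), which is immaterial at the paper's level of rigor.
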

\begin{proof}
Use Theorem~\ref{thm:da:Hineq} with $
 K(x, y)=
 \left\{
 \begin{array}{rll}
 0, & \mbox{ if } & x \leq y \\
               h(y), & \mbox{ if } & x > y  \, .
\end{array}
\right. $
\end{proof}

\begin{remark}
When $\alpha = 1$,  Corollaries~\ref{thm37} and \ref{thm38}
coincide, respectively, with Theorems~7 and 8 in \cite{adnan}.
In the particular case $\mathbb{T}=\mathbb{R}$, they give
Theorem~3 and Theorem~4 of \cite{krm}.
\end{remark}

It is interesting to consider the case when functions $F(x)$
and $G(y)$ of Theorem~\ref{thm:da:Hineq} are bounded.  We then
obtain the following:
\begin{theorem}
Let $\frac{1}{p}+ \frac{1}{q}=1$ with $p>1, K(x, y), f(x), g(y),
\varphi(x), \psi(x)$ be nonnegative functions and $F(x) =
\int_{a}^{b} \frac{K(x, y)}{\psi^{p}(y)} \diamondsuit_{\alpha} y
\leq F_{1}(x) \, , G(y) = \int_{a}^{b} \frac{K(x,
y)}{\varphi^{q}(x)} \diamondsuit_{\alpha} x \leq G_{1}(y) $. Then, the inequalities
\begin{multline}\label{eq7}
\int_{a}^{b} \int_{a}^{b} K(x, y) f(x)g(y) \diamondsuit_{\alpha} x
\diamondsuit_{\alpha} y \\
 \leq \left (\int_{a}^{b} \varphi^{p}(x) F_{1}(x) f^{p}(x)
\diamondsuit_{\alpha} x \right)^{\frac{1}{p}}   \left (
\int_{a}^{b} \psi^{q}(y) G_{1}(y) g^{q}(y) \diamondsuit_{\alpha} y \right)^{\frac{1}{p}}
\end{multline}
and
\begin{equation}\label{eq8}
\int_{a}^{b} G_{1}^{1-p}(y) \psi^{-p}(y) \left ( \int_{a}^{b} K(x,
y) f(x) \diamondsuit_{\alpha} x \right)^{p} \diamondsuit_{\alpha}
y \leq \int_{a}^{b} \varphi^{p}(x) F_{1}(x)
f^{p}(x)\diamondsuit_{\alpha} x
\end{equation}
hold and are equivalent.
\end{theorem}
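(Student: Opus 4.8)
The plan is to read this statement as the bounded-coefficient version of Theorem~\ref{thm:da:Hineq}: the sole change is that $F$ and $G$ are replaced by the pointwise majorants $F_1\ge F$ and $G_1\ge G$. I would therefore not repeat the H\"{o}lder argument from scratch, but deduce \eqref{eq7} and \eqref{eq8} from the already established \eqref{eq5} and \eqref{eq6}, using only the monotonicity property~(e) of the diamond-$\alpha$ integral together with the nonnegativity of all functions in play.

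For \eqref{eq7} I would start from \eqref{eq5}. Since $\varphi$, $f$, $\psi$, $g$ are nonnegative and $F(x)\le F_1(x)$, $G(y)\le G_1(y)$, the pointwise bounds $\varphi^p F f^p\le\varphi^p F_1 f^p$ and $\psi^q G g^q\le\psi^q G_1 g^q$ hold; by property~(e) each of the two factors on the right-hand side of \eqref{eq5} is enlarged, and \eqref{eq7} follows immediately.

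For \eqref{eq8} I would likewise begin from \eqref{eq6}, but here the sign of the exponent must be tracked. Because $p>1$ we have $1-p<0$, so the hypothesis $G\le G_1$ \emph{reverses} to $G_1^{1-p}\le G^{1-p}$; by property~(e) this only shrinks the left-hand side of \eqref{eq6}, whereas $F\le F_1$ enlarges its right-hand side. Both adjustments point in the favorable direction, so chaining the left-hand side of \eqref{eq8} through \eqref{eq6} up to the right-hand side of \eqref{eq8} closes the estimate and yields \eqref{eq8}. This sign bookkeeping is the one place where genuine care is required.

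Finally, for the equivalence of \eqref{eq7} and \eqref{eq8} I would repeat verbatim the algebraic argument already used for \eqref{eq5}$\Leftrightarrow$\eqref{eq6} in Theorem~\ref{thm:da:Hineq}, now with $F_1,G_1$ in place of $F,G$. To pass from \eqref{eq7} to \eqref{eq8} I substitute $g(y)=G_1^{1-p}(y)\psi^{-p}(y)\bigl(\int_a^b K(x,y)f(x)\diamondsuit_{\alpha}x\bigr)^{p-1}$, note that the second factor produced by \eqref{eq7} then equals the left-hand side of \eqref{eq8} raised to the power $1/q$, and divide through by it. For the converse I split $Kfg$ by inserting the weights $\psi^{-1}G_1^{-1/q}$ and $\psi G_1^{1/q}$, apply the diamond-$\alpha$ H\"{o}lder inequality in the variable $y$, and then invoke \eqref{eq8}. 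The exponent identities $(p-1)q=p$ and $p/q=p-1$ make the powers collapse exactly as before. Crucially, this manipulation never uses $G_1\ge G$, so it carries over unchanged; confirming that the equivalence does not covertly depend on $G_1=G$ is the only conceptual point to verify beyond the routine computation.
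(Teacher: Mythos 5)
Your proposal is correct, but there is nothing to compare it against: the paper states this theorem with \emph{no proof at all} --- it appears right after Theorem~\ref{thm:da:Hineq} as the ``bounded'' variant, with the deduction left entirely implicit. Your argument supplies exactly the missing deduction, and each step is sound. Inequality \eqref{eq7} follows from \eqref{eq5} by property (e), since $F\le F_1$ and $G\le G_1$ only enlarge the right-hand side. For \eqref{eq8}, your sign bookkeeping is indeed the one essential point: $1-p<0$ turns $G\le G_1$ into the pointwise bound $G_1^{1-p}\le G^{1-p}$, so the chain
$\mathrm{LHS}\,\eqref{eq8}\le \mathrm{LHS}\,\eqref{eq6}\le \mathrm{RHS}\,\eqref{eq6}\le \mathrm{RHS}\,\eqref{eq8}$
closes. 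And you are right that the equivalence argument in Theorem~\ref{thm:da:Hineq} is purely formal in the weights: it uses only the exponent identities $(p-1)q=p$ and $p/q=p-1$, never the fact that $G$ is the specific integral $\int_a^b K(x,y)\varphi^{-q}(x)\diamondsuit_{\alpha}x$, so it transfers verbatim to $F_1,G_1$. Two minor remarks. First, once the equivalence is established, proving either one of \eqref{eq7}, \eqref{eq8} suffices, so your separate derivation of \eqref{eq8} from \eqref{eq6} is logically redundant (though it is a harmless independent check). Second, your proof, like the paper's own conventions, implicitly requires $G,G_1>0$ (so the negative powers are defined) and finiteness and nonvanishing of the quantity one divides by in the equivalence step; also note that the exponent $\tfrac1p$ on the second factor of \eqref{eq7} (and of \eqref{eq5}) is a typo for $\tfrac1q$, which your computation silently corrects.
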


The following result extends
the one found in \cite{sulaiman}.

\begin{theorem}
\label{lst:thm}
Let $F, G, L(f, g), M(f)$, and $N(g)$ be positive functions,
$p> 1$, $\frac{1}{p}+ \frac{1}{q}=1$, such that
$$
0 < \int_{a}^{b} M^{p}(f(t)) F^{p}(t) \diamondsuit_{\alpha} t <
\infty \, , \quad
0 < \int_{c}^{d} N^{q}(g(t)) G^{q}(t)
\diamondsuit_{\alpha} t < \infty \, .
$$
Then, the inequalities
\begin{multline}\label{eq9}
\int_{a}^{b}  \int_{c}^{d} \frac{F(x)G(y)}{L(f(x), g(y))}
\diamondsuit_{\alpha}x \diamondsuit_{\alpha} y \\
\leq C \left (\int_{a}^{b} M^{p}(f(t))
F^{p}(t)\diamondsuit_{\alpha} t \right )^{\frac{1}{p}} \left
(\int_{c}^{d} N^{q}(g(t)) G^{q}(t)\diamondsuit_{\alpha} t \right
)^{\frac{1}{q}}
\end{multline}
and
\begin{equation}\label{eq10}
\int_{c}^{d} N^{-p}(g(y)) \left ( \int_{a}^{b} \frac{F(x)}{L(f(x),
g(y))} \diamondsuit_{\alpha} x \right )^{p} \diamondsuit_{\alpha}
y  \leq C^{p}\int_{a}^{b} M^{p}(f(t)) F^{p}(t)
\diamondsuit_{\alpha} t \, ,
\end{equation}
where $C$ is a constant, are equivalent.
\end{theorem}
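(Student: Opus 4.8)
The plan is to establish the equivalence of \eqref{eq9} and \eqref{eq10} by arguing in both directions, mirroring exactly the structure used for the equivalence of \eqref{eq5} and \eqref{eq6} in Theorem~\ref{thm:da:Hineq}. Throughout, I would abbreviate the inner integral appearing in both statements by $I(y) = \int_{a}^{b} \frac{F(x)}{L(f(x), g(y))} \diamondsuit_{\alpha} x$, so that the left-hand side of \eqref{eq9} reads $\int_{c}^{d} G(y) I(y)\diamondsuit_{\alpha} y$. I would also record that $\frac{1}{p}+\frac{1}{q}=1$ is equivalent to the two identities $(p-1)q = p$ and $pq = p+q$, which are used repeatedly in the exponent bookkeeping.

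For the direction \eqref{eq9} $\Rightarrow$ \eqref{eq10}, I would specialize the free function $G$ in \eqref{eq9} to
$$G(y) = N^{-p}(g(y))\, I(y)^{p-1}.$$
With this choice the left-hand side of \eqref{eq9} becomes $\int_{c}^{d} N^{-p}(g(y)) I(y)^{p}\diamondsuit_{\alpha} y =: J$, which is precisely the left-hand side of \eqref{eq10}. The key algebraic step is to verify, using $(p-1)q = p$ and $pq = p+q$, that $N^{q}(g(y)) G^{q}(y) = N^{-p}(g(y)) I(y)^{p}$, so that the weighted integral of $G^{q}$ on the right of \eqref{eq9} equals $J$ as well. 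Then \eqref{eq9} reads $J \leq C\left(\int_{a}^{b} M^{p}(f) F^{p}\diamondsuit_{\alpha} t\right)^{1/p} J^{1/q}$; dividing by $J^{1/q}$, which is legitimate since the hypotheses guarantee $0 < J < \infty$, and raising to the power $p$ yields \eqref{eq10}.

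For the reverse direction \eqref{eq10} $\Rightarrow$ \eqref{eq9}, I would write the left-hand side of \eqref{eq9} as $\int_{c}^{d} I(y) G(y)\diamondsuit_{\alpha} y$ and split the integrand as $\left(N^{-1}(g(y)) I(y)\right)\left(N(g(y)) G(y)\right)$. Applying the one-dimensional diamond-$\alpha$ H\"{o}lder inequality with conjugate exponents $p$ and $q$ gives
$$\int_{c}^{d} I(y) G(y)\diamondsuit_{\alpha} y \leq \left(\int_{c}^{d} N^{-p}(g(y)) I(y)^{p}\diamondsuit_{\alpha} y\right)^{1/p}\left(\int_{c}^{d} N^{q}(g(y)) G^{q}(y)\diamondsuit_{\alpha} y\right)^{1/q}.$$
Bounding the first factor by \eqref{eq10} replaces it with $C\left(\int_{a}^{b} M^{p}(f) F^{p}\diamondsuit_{\alpha} t\right)^{1/p}$, and combining with the second factor delivers \eqref{eq9}.

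Once the correct substitution is identified the computation is essentially routine, so the main obstacle is the bookkeeping: checking the exponent identity $N^{q} G^{q} = N^{-p} I^{p}$ under the chosen $G$, and confirming that the positivity and finiteness hypotheses on the two weighted integrals are exactly what is needed to justify dividing by $J^{1/q}$ in the forward direction and to keep the H\"{o}lder factors finite in the reverse direction. No analytic input beyond the already-established diamond-$\alpha$ H\"{o}lder inequality and the linearity and monotonicity properties (a)--(e) is required.
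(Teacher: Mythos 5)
Your proof is correct and follows essentially the same route as the paper's: the same substitution $G(y)=N^{-p}(g(y))\,I(y)^{p-1}$ (the paper writes the exponent as $\frac{p}{q}=p-1$) to derive \eqref{eq10} from \eqref{eq9}, and the same H\"{o}lder splitting of the integrand into $\left(N^{-1}(g(y))I(y)\right)\left(N(g(y))G(y)\right)$ for the converse. Your write-up is in fact slightly tidier, since you make the division by $J^{1/q}$ explicit and correctly obtain the constant $C$ in the converse direction, where the paper's displayed chain carries a typographical $C^{p}$.
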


\begin{proof}
Suppose that the inequality \eqref{eq10} is valid. Then,
\begin{equation*}
\begin{split}
& \int_{a}^{b} \int_{c}^{d} \frac{F(x)G(y)}{L(f(x), g(y))}
\diamondsuit_{\alpha}x \diamondsuit_{\alpha} y \\
& = \int_{c}^{d} N(g(y)) G(y) \left ( N^{-1}(g(y)) \int_{a}^{b}
\frac{F(x)}{L(f(x), g(y))} \diamondsuit_{\alpha}x
\right)\diamondsuit_{\alpha}y \\
&\leq  \left ( \int_{c}^{d} N^{q}(g(y)) G^{q}(y)
\diamondsuit_{\alpha} y \right )^{\frac{1}{q}} \left (
\int_{c}^{d} N^{-p}(g(y)) \left ( \int_{a}^{b} \frac{F(x)}{L(f(x),
g(y))} \diamondsuit_{\alpha}x
\right )^{p}\diamondsuit_{\alpha}y  \right)^{\frac{1}{p}}\\
& \leq C^{p}\left ( \int_{a}^{b} M^{p}(f(t)) F^{p}(t)
\diamondsuit_{\alpha} t \right )^{\frac{1}{p}} \left (
\int_{c}^{d} N^{q}(g(t)) G^{q}(t) \diamondsuit_{\alpha} t \right
)^{\frac{1}{q}}.
\end{split}
\end{equation*}
We just proved inequality \eqref{eq9}.
Let us now suppose that the inequality \eqref{eq9} is valid.
By setting $G(y)= N^{-p}(g(y)) \left (\int_{a}^{b}
\frac{F(x)}{L(f(x), g(y))} \diamondsuit_{\alpha}x \right
)^{\frac{p}{q}} \diamondsuit_{\alpha} y $ and applying \eqref{eq9}, we obtain that
\begin{equation*}
\begin{split}
& \int_{c}^{d} N^{-p}(g(y)) \left ( \int_{a}^{b}
\frac{F(x)}{L(f(x), g(y))} \diamondsuit_{\alpha} x \right )^{p}
\diamondsuit_{\alpha} y  \\
& = \int_{c}^{d} \left ( \int_{a}^{b} \frac{F(x)}{L(f(x), g(y))}
\diamondsuit_{\alpha}x \right ) N^{-p}(g(y)) \left (\int_{a}^{b}
\frac{F(x)}{L(f(x), g(y))} \diamondsuit_{\alpha}x \right
)^{\frac{p}{q}} \diamondsuit_{\alpha} y \\
& \leq C \left ( \int_{a}^{b} M^{p}(f(x)) F^{p}(x)
\diamondsuit_{\alpha} x \right )^{\frac{1}{p}}\\
& \qquad \times \left ( \int_{c}^{d}  N^{q}(g(y)) N^{-pq}(g(y)) \left
(\int_{a}^{b} \frac{F(x)}{L(f(x), g(y))} \diamondsuit_{\alpha}x
\right
)^{p}\diamondsuit_{\alpha}y \right )^{\frac{1}{q}}\\
& =  C \left ( \int_{a}^{b} M^{p}(f(x)) F^{p}(x)
\diamondsuit_{\alpha} x \right )^{\frac{1}{p}} \\
& \qquad \qquad \left (
\int_{c}^{d} N^{-p}(g(y)) \left (  \int_{a}^{b}
\frac{F(x)}{L(f(x), g(y))} \diamondsuit_{\alpha}x \right)^{p}
\diamondsuit_{\alpha}y \right )^{\frac{1}{q}}.
\end{split}
\end{equation*}
It follows \eqref{eq10}:
\begin{equation*}
\int_{c}^{d} N^{-p}(g(y)) \left ( \int_{a}^{b} \frac{F(x)}{L(f(x),
g(y))} \diamondsuit_{\alpha} x \right )^{p} \diamondsuit_{\alpha}
y  \leq C^{p} \int_{a}^{b} M^{p}(f(t)) F^{p}(t)
\diamondsuit_{\alpha} t.
\end{equation*}
\end{proof}


\section{Conclusion}

The study of integral inequalities on time scales via the
diamond-$\alpha$ integral, which is defined as a linear
combination of the delta and nabla integrals, plays an important
role in the development of the theory of time scales
\cite{ozkan,Stef,srd,sidel}. In this paper we generalize some
delta-integral inequalities on time scales to diamond-$\alpha$
integrals. As special cases, one obtains previous H\"{o}lder's
and Hardy's inequalities.
\\
\\
{\bf Acknowledgements:}

Work partially supported by the {\it Centre for Research on
Optimization and Control} (CEOC) from the {\it Portuguese
Foundation for Science and Technology} (FCT), cofinanced by the
European Community Fund FEDER/POCI 2010.


\bibliographystyle{amsplain}


\end{document}